\documentclass[reqno]{amsart}
\usepackage{hyperref}

\begin{document}
\title[\hfil Existence and concentration of solutions]
{Existence and concentration of solutions for a fractional Schr\"odinger equations with sublinear nonlinearity}

\author[J. Zhang; W. Jiang \hfil \hfilneg]
{ Jinguo Zhang and Weifeng Jiang}  

\address{Jinguo Zhang \newline
School of Mathematics,
Jiangxi Normal University,
Nanchang, 330022 Jiangxi, China}
\email{jgzhang@jxnu.edu.cn}

\address{Weifeng Jiang \newline
School of Science,
Wuhan University of Technology,
Wuhan, 430070 Hubei, China}

\subjclass[2000]{35J35, 35J60}
\keywords{Fractional elliptic equations; variational method;
concentration}

\begin{abstract}
 This article concerns the fractional elliptic equations
 \begin{equation*}
   (-\Delta)^{s}u+\lambda V(x)u=f(u), \quad u\in H^{s}(\mathbb{R}^N),
 \end{equation*}
 where $(-\Delta)^{s}$ ($s\in (0\,,\,1)$) denotes the fractional Laplacian, $\lambda >0$ is a parameter,
  $V\in C(\mathbb{R}^N)$ and $V^{-1}(0)$
 has nonempty interior. Under some mild assumptions, we establish the existence
 of nontrivial solutions. Moreover, the concentration of solutions is also
 explored on the set $V^{-1}(0)$ as $\lambda\to\infty$.
\end{abstract}

\maketitle
\numberwithin{equation}{section}
\newtheorem{theorem}{Theorem}[section]
\newtheorem{lemma}[theorem]{Lemma}
\allowdisplaybreaks

\newcommand{\R}{\mathbb{R}^{N}}
\newcommand{\HO}{H^{s}_{0,L}(\mathcal{C}_{\Omega})}
\newcommand{\HR}{H^{s}(\mathbb{R}^{N})}
\newcommand{\HHR}{H^{s}_{L}(\mathbb{R}^{N+1}_{+})}

\section{Introduction and statement of main results}
We consider the nonlinear fractional Schr\"odinger equation
\begin{equation}\label{1.1}
 (-\Delta)^{s}u+\lambda V(x)u=f(u), \quad  u\in H^{s}(\mathbb{R}^N),
\end{equation}
 where $(-\Delta)^{s}$ $(0<s<1)$ is the fractional Laplace operator, $\lambda >0$
is a parameter, and $H^{s}(\mathbb{R}^N)$ is the usual fractional Sobolev space
with the norm
$$
\|u\|_{H^{s}}:=\Big(\int_{\mathbb{R}^{N}}(|(-\Delta)^{\frac{s}{2}}u|^{2}+|u|^{2})dx\Big)^{\frac{1}{2}}.
$$

The fractional Schr\"odinger equation is a fundamental equation of fractional quantum mechanics.
It was discovered by Nick Laskin as a result of extending the Feynman path integral,
from the Brownian-like to L\'evy-like euantumn mechanical paths.
Recently, a great attention has been devoted to the fractional and nonlocal operators of elliptic type,
both for their interesting theoretical structure and in view of concrete applications in many fields.
This type of operator has been studied by many authors \cite{ll2007,xt2010,ajly2011,ege2012,fqt2012,sg2013,zl2014} and references therein.

In \cite{fqt2012}, Felmer et al. proved the existence of positive solutions of nonlinear Schr\"odinger equation
involving the fractional Laplacian in $\mathbb{R}^{N}$.
For the whole space $\mathbb{R}^N$ case, the main difficulty of this problem is
the lack of compactness for Sobolev embedding theorem.
To overcome this difficulty, some authors assumed that the potential $V$ satisfies
some additional condition.  Later, the authors
in \cite{sg2013} considered the equation \eqref{1.1} with the critical exponent growth
They proved that the energy functional possess
the property of locally compact. In this paper, we are interested
in the case that the nonlinearity $f$ is sublinear and indefinite.
To our knowledge, few works concerning on this case up to now.
Motivated by the above articles,  we continue to
consider problem \eqref{1.1} with steep well potential and study the existence
of nontrivial solution and concentration results
under some mild assumptions different from those studied previously.
To reduce our statements, we  make the following assumptions for
potential $V$:

\begin{itemize}
\item[$(V_1)$] $V(x)\in C(\mathbb{R}^N)$ and $V(x)\geq 0$ on
$\mathbb{R}^N$;

\item[$(V_2)$]  There exists a constant $b>0$ such that the set
$V_{b}:=\{x\in \mathbb{R}^N|V(x)<b\}$ is nonempty and has finite Lebesgue measure;

\item[$(V_3)$]  $\Omega=\text{int} V^{-1}(0)$ is nonempty and has smooth boundary with
$\bar{\Omega}=V^{-1}(0)$.
\end{itemize}

Based on the above assumptions, the main purpose of this paper is to prove
the existence of nontrivial solutions and to investigate the
concentration phenomenon of solutions on the set $V^{-1}(0)$ as
 $\lambda\to \infty$.
This kind of potential $\lambda V$ satisfying
$(V_1)-(V_3)$ is referred as the steep well potential.
It is worth mentioning that some papers always
assumed the potential $V(x)>0$ for all $x\in \mathbb{R}^{N}$.
Compared with the case $V>0$, our assumptions on $V$ are rather weak,
and perhaps more important.
 To state our results, we need the following assumptions:
\begin{itemize}
\item[($f_1$)]  $f\in C(\mathbb{R}^N, \mathbb{R})$ and there exist constants
 $1<p<2$ and functions
 $\xi(x)\in L^{\frac{2}{2-p}}(\mathbb{R}^N, \mathbb{R}^{+})$ such that
$$
|f(u)|\leq \xi(x)|u|^{p-1},\quad \text{for all}\,\,\,
u\in  \mathbb{R}.
$$

\item[($f_2$)]  There exist three constants $\eta, \delta >0, \gamma\in (1, 2)$
such that
$$
|F(u)|\geq \eta |u|^{\gamma} \quad \text{and all}\,\,\,x\in \Omega\,\,\,\text{and}\,\,\,|u|\leq \delta,
$$
where $F(u)=\int_0^{u}f(t)dt$.
\end{itemize}

On the existence of solutions we have the following result.

 \begin{theorem} \label{thm1.1}
Assume that the conditions $(V_1)-(V_3)$, $(f_1)$ and $(f_2)$ hold.
Then there exists $\Lambda_0>0$ such that
 for every $\lambda>\Lambda_0$, problem \eqref{1.1} has at least one
 solution $u_{\lambda}$.
\end{theorem}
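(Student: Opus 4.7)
The plan is to produce the nontrivial solution $u_{\lambda}$ as a global minimizer of the energy functional on an appropriate weighted Hilbert space, exploiting the sublinear growth in $(f_{1})$ to get coercivity and $(f_{2})$ to make the infimum strictly negative, and the steep well structure in $(V_{1})$–$(V_{3})$ to recover compactness.

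First I would fix the functional setting. Let
\[
E_{\lambda} := \Big\{\, u \in H^{s}(\mathbb{R}^{N}) : \int_{\mathbb{R}^{N}} V(x)|u|^{2}\,dx < \infty \,\Big\},
\]
equipped with the inner product producing the norm
\(
\|u\|_{\lambda}^{2} = \int_{\mathbb{R}^{N}} |(-\Delta)^{s/2}u|^{2}\,dx + \lambda \int_{\mathbb{R}^{N}} V(x) u^{2}\,dx.
\)
A short Hölder computation using $(V_{2})$ (splitting $\mathbb{R}^{N}=V_{b}\cup V_{b}^{c}$, bounding the integral on $V_{b}$ by the Sobolev embedding and the measure of $V_{b}$, and on $V_{b}^{c}$ by $\tfrac{1}{\lambda b}\int V u^{2}$) shows that $(E_{\lambda},\|\cdot\|_{\lambda})$ is a Hilbert space continuously embedded into $H^{s}(\mathbb{R}^{N})$ once $\lambda$ is large enough. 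The associated energy functional is
\[
I_{\lambda}(u) = \tfrac{1}{2}\|u\|_{\lambda}^{2} - \int_{\mathbb{R}^{N}} F(u)\,dx, \qquad u \in E_{\lambda}.
\]

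Next I would establish that $I_{\lambda}$ is coercive, bounded below, and attains a negative infimum. By $(f_{1})$ and Hölder with exponents $2/(2-p)$ and $2/p$,
\[
\int_{\mathbb{R}^{N}} |F(u)|\,dx \le \tfrac{1}{p}\|\xi\|_{L^{2/(2-p)}} \|u\|_{L^{2}}^{p} \le C \|u\|_{\lambda}^{p},
\]
so $I_{\lambda}(u)\ge \tfrac{1}{2}\|u\|_{\lambda}^{2}-C\|u\|_{\lambda}^{p}$ and since $p<2$ this is coercive and bounded below. To show $c_{\lambda}:=\inf_{E_{\lambda}} I_{\lambda}<0$, pick any $\varphi \in C_{c}^{\infty}(\Omega)\setminus\{0\}$ with $\|\varphi\|_{\infty}\le 1$. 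Because $V\equiv 0$ on $\Omega=\operatorname{int}V^{-1}(0)$, the norm $\|\varphi\|_{\lambda}$ is independent of $\lambda$, and $(f_{2})$ gives, for $0<t\le \delta$,
\[
I_{\lambda}(t\varphi) \le \tfrac{t^{2}}{2}\|\varphi\|_{\lambda}^{2} - \eta\, t^{\gamma}\!\int_{\Omega} |\varphi|^{\gamma}\,dx,
\]
which is strictly negative for small $t$ since $\gamma<2$. Hence $c_{\lambda}<0$.

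The main obstacle is the compactness needed to run the direct method on the whole of $\mathbb{R}^{N}$. I would prove: for $\lambda$ sufficiently large, any sequence $(u_{n})\subset E_{\lambda}$ with $u_{n}\rightharpoonup u$ in $E_{\lambda}$ satisfies $u_{n}\to u$ in $L^{q}(\mathbb{R}^{N})$ for every $q\in[2,2_{s}^{*})$. The key estimate is
\[
\int_{V_{b}^{c}} |u_{n}-u|^{2}\,dx \le \frac{1}{\lambda b}\int_{\mathbb{R}^{N}} V(u_{n}-u)^{2}\,dx \le \frac{\|u_{n}-u\|_{\lambda}^{2}}{\lambda b},
\]
which controls the tail on $V_{b}^{c}$ (since $\|u_{n}-u\|_{\lambda}$ is bounded), while on the finite measure set $V_{b}$ the compact embedding $H^{s}(V_{b})\hookrightarrow L^{2}(V_{b})$ handles the remainder; interpolation with the $L^{2_{s}^{*}}$ bound upgrades to $L^{q}$, $q<2_{s}^{*}$. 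Choosing $\Lambda_{0}$ from this argument is the crucial quantitative step.

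With compactness in hand I would conclude by the direct method. Take a minimizing sequence $(u_{n})$ for $I_{\lambda}$; coercivity yields boundedness in $E_{\lambda}$, so along a subsequence $u_{n}\rightharpoonup u_{\lambda}$ in $E_{\lambda}$, $u_{n}\to u_{\lambda}$ in $L^{2}(\mathbb{R}^{N})$ and a.e.\ on $\mathbb{R}^{N}$. The domination $|F(u_{n})|\le \tfrac{1}{p}\xi(x)|u_{n}|^{p}$ together with the $L^{2/p}$–convergence of $|u_{n}|^{p}$ makes $\{F(u_{n})\}$ uniformly integrable, so Vitali's theorem gives $\int F(u_{n})\,dx\to\int F(u_{\lambda})\,dx$. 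Combined with the weak lower semicontinuity of $\|\cdot\|_{\lambda}$ this yields $I_{\lambda}(u_{\lambda})\le c_{\lambda}$, hence $I_{\lambda}(u_{\lambda})=c_{\lambda}<0=I_{\lambda}(0)$, so $u_{\lambda}\not\equiv 0$. Standard variational computations confirm $u_{\lambda}$ is a weak solution of \eqref{1.1}, completing the proof.
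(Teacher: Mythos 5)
Your overall strategy --- global minimization of a coercive, bounded-below functional whose infimum is made negative by $(f_2)$ --- is essentially the paper's (the paper phrases it via Rabinowitz's theorem that a bounded-below $C^1$ functional satisfying the (PS)-condition attains its infimum at a critical point, and works in the Caffarelli--Silvestre extension rather than directly in $H^s(\mathbb{R}^N)$, but the coercivity estimate from $(f_1)$ and the test-function computation from $(f_2)$ are the same as yours). The problem is your compactness paragraph. The claim that, for $\lambda$ large but fixed, every weakly convergent sequence in $E_\lambda$ converges strongly in $L^q(\mathbb{R}^N)$ for $q\in[2,2^*_s)$ is false under $(V_1)$--$(V_3)$ alone: a sequence of normalized bumps translating to infinity inside $\{V\ge b\}$ (where $V$ may remain bounded) is bounded in $E_\lambda$, converges weakly to $0$, but not strongly in $L^2$. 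Your own estimate reveals the issue: it yields only
\[
\limsup_{n\to\infty}\int_{V_b^c}|u_n-u|^2\,dx \;\le\; \frac{\sup_n\|u_n-u\|_{\lambda}^2}{\lambda b},
\]
a quantity that is $O(1/\lambda)$ but does \emph{not} tend to $0$ as $n\to\infty$ for fixed $\lambda$. No choice of $\Lambda_0$ repairs this, because the defect is uniform in $n$, not in $\lambda$. This matters downstream: your Vitali step invokes ``the $L^{2/p}$-convergence of $|u_n|^p$,'' which presupposes exactly the strong $L^2(\mathbb{R}^N)$ convergence that has not been established.

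The correct source of compactness here is not the steep well but the weight $\xi\in L^{2/(2-p)}(\mathbb{R}^N)$, and this is precisely what the paper's (PS)-verification uses: split $\mathbb{R}^N$ into a large ball $B_{R_\epsilon}$, chosen so that $\|\xi\|_{L^{2/(2-p)}(\mathbb{R}^N\setminus B_{R_\epsilon})}<\epsilon$, handle the exterior by H\"older (the contribution of the nonlinearity there is $\le C\epsilon$ uniformly in $n$), and handle the interior by the local compact embedding. Your argument is salvageable along the same lines without any global strong convergence: local compactness gives $u_n\to u$ a.e., and the bound $\int_A \xi|u_n|^p\,dx\le \|\xi\|_{L^{2/(2-p)}(A)}\|u_n\|_{L^2}^p$ shows $\{F(u_n)\}$ is uniformly integrable and tight because $\|\xi\|_{L^{2/(2-p)}(A)}$ is small whenever $A$ has small measure or lies far out. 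Hence $u\mapsto\int F(u)\,dx$ is weakly continuous on $E_\lambda$, the functional is weakly lower semicontinuous and coercive, and the direct method closes. With the compactness paragraph replaced by this argument (and the Vitali step justified via $\xi$ rather than via $L^{2/p}$-convergence of $|u_n|^p$), your proof is correct and equivalent to the paper's.
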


 On the concentration of solutions we have the following result.

 \begin{theorem} \label{thm1.2}
 Let $u_{\lambda}$ be a solution of problem \eqref{1.1} obtained
 in Theorem \ref{thm1.1}, then $u_{\lambda}\to u_0$ strongly in $H^{s}(\mathbb{R}^N)$
 as $\lambda\to \infty$, where $u_0$
 is a nontrivial solution of the equation
 \begin{equation}\label{1.2}
\left\{
\aligned
&(-\Delta)^{s}u=f(u), \quad &x\in\Omega,\\
&u=0,\quad  &x\in \partial\Omega.
\endaligned\right.
\end{equation}
\end{theorem}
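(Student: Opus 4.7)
The plan is to follow a standard steep-well concentration strategy: derive bounds uniform in $\lambda$, extract a weak limit supported on $\bar\Omega$, identify it as a solution of \eqref{1.2}, and finally upgrade weak to strong convergence. Working with the equivalent norm $\|u\|_\lambda^2 = \int_{\mathbb{R}^N}|(-\Delta)^{s/2}u|^2\,dx + \lambda \int_{\mathbb{R}^N} V(x) u^2\,dx$ and functional $I_\lambda(u)=\tfrac12\|u\|_\lambda^2 - \int F(u)\,dx$, condition $(f_1)$ combined with H\"older and the Sobolev embedding $H^s\hookrightarrow L^2$ yields an estimate of the form $\int |F(u)|\,dx \leq C\|\xi\|_{L^{2/(2-p)}}\|u\|_\lambda^p$. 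Since the solution $u_\lambda$ of Theorem \ref{thm1.1} is obtained as a minimizer with $I_\lambda(u_\lambda)\leq 0$ (the negativity being guaranteed by $(f_2)$ via a suitable test function supported in $\Omega$), and $1<p<2$, this forces the uniform bound $\|u_\lambda\|_\lambda \leq C$ independent of $\lambda$.

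From this bound, after passing to a subsequence, $u_\lambda \rightharpoonup u_0$ in $H^s(\mathbb{R}^N)$, $u_\lambda\to u_0$ in $L^q_{\mathrm{loc}}$ for $q\in[1,2^\ast_s)$ and almost everywhere. Moreover $\int V(x) u_\lambda^2\,dx \leq C/\lambda \to 0$, so Fatou's lemma gives $\int V(x) u_0^2\,dx = 0$; since $V>0$ outside $V^{-1}(0)=\bar\Omega$, I conclude that $u_0\equiv 0$ a.e. on $\mathbb{R}^N\setminus \bar\Omega$, placing $u_0$ in the natural Dirichlet space on $\Omega$. Testing the weak formulation of \eqref{1.1} against any $\varphi\in C_c^\infty(\Omega)$ and using $V\equiv 0$ on $\mathrm{supp}\,\varphi$ makes the $\lambda V$-term drop out; the right-hand side $\int f(u_\lambda)\varphi\,dx$ converges by $(f_1)$, a local $L^p$ compactness argument, and dominated convergence, showing that $u_0$ satisfies \eqref{1.2}.

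For the nontriviality of $u_0$, I would choose a fixed $\psi\in C_c^\infty(\Omega)$ with $|\psi|\leq\delta$ on its support and, using $(f_2)$ together with $1<\gamma<2$, produce $t_\ast>0$ with $I_\lambda(t_\ast\psi)\leq -c_0<0$ uniformly in $\lambda$ (since the $\lambda V$-contribution vanishes on $\Omega$). The minimizing property then gives $I_\lambda(u_\lambda)\leq -c_0$, and combining this with the sublinear upper bound on $\int F(u_\lambda)\,dx$ keeps $\|u_\lambda\|_\lambda$ bounded away from zero, so the limit $u_0$ cannot be trivial. For strong convergence in $H^s(\mathbb{R}^N)$, I would test \eqref{1.1} with $u_\lambda - u_0$, subtract the analogous identity for $u_0$ on $\Omega$, and show
$$
\int_{\mathbb{R}^N} f(u_\lambda)(u_\lambda - u_0)\,dx \longrightarrow 0,
$$
which by $(f_1)$ reduces to equi-integrability of $\xi|u_\lambda|^{p-1}|u_\lambda-u_0|$; this is handled by splitting $\mathbb{R}^N$ into a large ball, where $L^q_{\mathrm{loc}}$ convergence applies, and its complement, where $\xi\in L^{2/(2-p)}$ provides a small tail.

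The main obstacle will be combining strong convergence with the nontriviality: the $\lambda V u^2$ term only yields $L^2$-control of $u_\lambda$ outside $\bar\Omega$, so matching the nonlocal Gagliardo seminorm in the complement requires a careful interplay between this $L^2$ decay and the tail integrability of $\xi$, while simultaneously ensuring the chosen lower bound $-c_0$ on the energy is transferred to a quantitative lower bound on $\|u_0\|$ despite the absence of compactness in the full space.
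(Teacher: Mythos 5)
Your overall strategy coincides with the paper's: uniform bound on $\|u_\lambda\|_\lambda$ from the negative energy level plus the coercivity estimate \eqref{2.4}, Fatou's lemma to force $\int V|u_0|^2=0$ and hence $u_0\in H^s_0(\Omega)$ by $(V_3)$, testing against $\varphi\in C_c^\infty(\Omega)$ to identify the limit equation, and upgrading to strong convergence via the Euler--Lagrange identities. One genuine difference is in how the nonlinear term $\int f(u_\lambda)(u_\lambda-u_0)\,dx\to 0$ is obtained: the paper first proves global $L^p(\mathbb{R}^N)$ convergence ($2\le p<2^*_s$) by a contradiction argument with the Lions vanishing lemma, whereas you control the tail directly by the smallness of $\|\xi\|_{L^{2/(2-p)}(\mathbb{R}^N\setminus B_R)}$ together with local compactness --- exactly the mechanism the paper itself uses in the (PS)-condition proof (Lemma \ref{lem2.3}). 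Your route is cleaner and avoids the somewhat delicate vanishing argument; the paper's route yields the stronger by-product of global $L^p$ convergence. Note also that you cannot literally ``subtract the analogous identity for $u_0$ on $\Omega$'' since $u_\lambda-u_0$ is not an admissible test function for the Dirichlet problem on $\Omega$; the correct bookkeeping (as in the paper) is to use $\langle J_\lambda'(u_\lambda),u_\lambda\rangle=\langle J_\lambda'(u_\lambda),u_0\rangle=0$, noting $Vu_0\equiv 0$ so that $\langle u_\lambda,u_0\rangle_\lambda$ reduces to the $H^s$ pairing and converges to $\|u_0\|^2$ by weak convergence.

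The one step that does not close as stated is the nontriviality of $u_0$. From $I_\lambda(u_\lambda)\le -c_0<0$ and $\int F(u_\lambda)\le C\|u_\lambda\|_\lambda^p$ you correctly get $\|u_\lambda\|_\lambda\ge (c_0/C)^{1/p}$, but this does not by itself exclude $u_0=0$: the norm $\|\cdot\|_\lambda$ contains the term $\lambda\int V u_\lambda^2$, which could in principle carry all the mass while $u_\lambda\to 0$ in $H^s(\mathbb{R}^N)$. You need one more ingredient, e.g.\ either (i) $-c_0\ge I_\lambda(u_\lambda)\ge -\int F(u_\lambda)$, so $\int F(u_\lambda)\ge c_0$, and since $\int F(u_\lambda)\le C\|\xi\|_{L^{2/(2-p)}}\|u_\lambda\|_{L^2}^p\to C\|\xi\|\,\|u_0\|_{L^2}^p$ by the $L^2$ convergence you have already established, conclude $u_0\neq 0$; or (ii) as in the paper, pass to the limit in the energy to get $J|_{\Omega}(u_0)\le \tilde c<0$, where $\tilde c$ is the (negative) infimum of the restricted functional. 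Either patch is two lines, but as written the inference ``$\|u_\lambda\|_\lambda$ bounded away from zero $\Rightarrow$ $u_0\neq 0$'' is a gap.
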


The paper is organized as follows.
In Section $2$, we give some preliminary results.
In Section3, we finish the proof of Theorem \ref{thm1.1}.
In Section $4$, we study the concentration of solutions and prove
Theorem \ref{thm1.2}.

\section{Preliminary results}
The fractional Laplacian $(-\Delta)^{s}$ with $s\in (0\,,\,1)$ of a function $u:\, \R\to \mathbb{R}$ is defined by
$$
\mathcal{F}((-\Delta)^{s}u)(\xi) = |\xi|^{2s}\mathcal{F}(u)(\xi), \quad \forall  s \in (0\,,\,1),
$$
where $\mathcal{F}$ is the Fourier transform.

Recently, Caffarelli and Silvestre \cite{ll2007} developed a local interpretation
of the fractional Laplacian given in $\mathbb{R}^{N}$ by considering a Neumann type operator in the extended
domain $\mathbb{R}_{+}^{N+1}$ defined by $\{(x,t)\in  \mathbb{R}^{N+1}:\, t>0\}$.
 For $u\in H^{s}(\mathbb{R}^{N})$, the solution $w\in H^{s}_{L}(\mathbb{R}^{N+1}_{+})$ of
 \begin{equation*}
 \left\{
 \aligned
 -div(t^{1-2s}\nabla w)&=0\quad \text{in}\,\,\,\mathbb{R}^{N+1}_{+},\\
 w&=u\quad \text{in}\,\,\,\mathbb{R}^{N+}\times\{0\},\\
 \endaligned
 \right.
 \end{equation*}
is called $s$-harmonic extension $w=E_{s}(u)$ of $u$ and it is proved in \cite{ll2007}
that
$$
\lim\limits_{t\to 0^+}t^{1-2s}\frac{\partial w}{\partial t}(x,t)=-k_{s}(-\Delta)^{s}u(x),
$$
where
$k_{s}:=2^{1-2s}\Gamma(1-s)\Gamma(s)^{-1}$, the space $H^{s}_{L}(\mathbb{R}^{N+1}_{+})$
is defined as the completion of $C_{0}^{\infty}(\overline{\mathbb{R}^{N+1}_{+}})$
under the norm
$$
\|w\|_{H^{s}_{L}}:=\Big(k_{s}\int_{\mathbb{R}^{N+1}_{+}}t^{1-2s}|\nabla w(x,t)|^{2}dxdt\Big)^{\frac{1}{2}},
$$

 A similar extension, for nonlocal problems on bounded domain $\Omega$ with the zero Dirichlet boundary condition
 was established. In this case, the space $H^{s}_{0,L}(\mathcal{C}_{\Omega})$
is defined as the completion of $C_{0}^{\infty}(\overline{\mathcal{C}}_{\Omega})$
under the norm
$$
\|w\|_{H^{s}_{0,L}}:=\Big(k_{s}\int_{\mathcal{C}_{\Omega}}t^{1-2s}|\nabla w(x,t)|^{2}dxdt\Big)^{\frac{1}{2}},
$$
where $\mathcal{C}_{\Omega}:=\Omega\times(0\,,\,+\infty)\subset \mathbb{R}^{N+1}_{+}$,
some more detail see \cite{bes2013,beau2012}.

In this paper, our problem \eqref{1.1} will be studied in the half-space, namely,
\begin{equation}\label{eq1-1*}
 \left\{
 \aligned
 &-div(t^{1-2s}\nabla w)=0\quad &\text{in}\,\,\,\mathbb{R}^{N+1}_{+},\\
 &-k_{s}\frac{\partial w}{\partial \nu}=-\lambda V(x)w(x,0)+ f(w(x,0))
 \quad &\text{in}\,\,\,\mathbb{R}^{N}\times\{0\},\\
 \endaligned
 \right.
 \end{equation}
where
$$
\frac{\partial w}{\partial \nu}:=\lim\limits_{t\to 0^+}t^{1-2s}\frac{\partial w}{\partial t}(x,t).
$$

Consider the energy functional associated to \eqref{eq1-1*} given by
\begin{equation}\label{eq1-4}
\aligned
J_{\lambda}(w)
&=\frac{k_{s}}{2}\int_{\mathbb{R}_{+}^{N+1}}t^{1-2s}|\nabla w(x,t)|^{2}dxdt
+\frac{\lambda}{2}\int_{\R}V(x)|w(x,0)|^{2}dx\\
&-\int_{\R}F(w(x,0))dx,\\
\endaligned
\end{equation}
which is $C^{1}$ with Cateaus derivative
\begin{equation*}\label{eq1-4}
\aligned
\langle J'_{\lambda}(w),v\rangle&=k_{s}\int_{\mathbb{R}_{+}^{N+1}}t^{1-2s}\nabla w\cdot \nabla vdxdt
+\lambda\int_{\R}V(x)w(x,0)v(x,0)dx\\
&-\int_{\R}f(w(x,0))v(x,0)dx,\\
\endaligned
\end{equation*}
for all $w,\,v\in H^{s}_{L}(\mathbb{R}_{+}^{N+1})$.

By the argument as above, if $w\in \HHR$ is a critical point of $J_{\lambda}$, then $u=Tr(w)\in \HR$ is an
energy or weak solution of problem \eqref{1.1}.
The converse is also right. By the equivalence of these two formulations, we will use both
formulations in the sequel to their best advantage.

For $\lambda>0$. Let
$$
E_{\lambda}=\Big\{w\in \HHR: \int_{\mathbb{R}^{N+1}_{+}}t^{1-2s}|\nabla w|^{2}dxdt+\lambda\int_{\mathbb{R}^{N}}V(x)|w(x,0)|^{2}dx<+\infty\Big\},
$$
be equipped with the norm
$$
\|w\|_{\lambda}=\Big(k_{s}\int_{\mathbb{R}^{N+1}_{+}}t^{1-2s}|\nabla w|^{2}dxdt+\lambda\int_{\mathbb{R}^{N}}V(x)|w(x,0)|^{2}dx\Big)^{1/2}.
$$
It is clear that $E_{\lambda}$ is a Hilbert space, and $\|w\|_{1}\leq \|w\|_{\lambda}$ for all $w\in E_{1}$  with $\lambda \geq 1$.
Moreover, for all $w\in E_{\lambda}$, by using $(V_1)-(V_2)$ and  Sobolev inequality, we have
\begin{equation*}
\aligned
&k_{s}\int_{\mathbb{R}^{N+1}_{+}}t^{1-2s}|\nabla w|^{2}dxdt+\int_{\mathbb{R}^{N}}|w(x,0)|^{2}dx\\
&=k_{s}\int_{\mathbb{R}^{N+1}_{+}}t^{1-2s}|\nabla w|^{2}dxdt+\int_{V_{b}}|w(x,0)|^{2}\,dx
 +\int_{\mathbb{R}^N\backslash V_{b}}|w(x,0)|^{2}\,dx\\
&\leq k_{s}\int_{\mathbb{R}^{N+1}_{+}}t^{1-2s}|\nabla w|^{2}dxdt
 +|V_{b}|^{\frac{2^{*}-2}{2^{*}}}
 \Big(\int_{\mathbb{R}^N}|w(x,0)|^{2^{*}}\,dx\Big)^{2/2^*}
 +\int_{\mathbb{R}^N\backslash V_{b}}|w(x,0)|^{2}\,dx\\
&\leq k_{s}\int_{\mathbb{R}^{N+1}_{+}}t^{1-2s}|\nabla w|^{2}dxdt
 +|V_{b}|^{\frac{2^{*}-2}{2^{*}}}
 \Big(\int_{\mathbb{R}^N}|w(x,0)|^{2^{*}}\,dx\Big)^{2/2^*}\\
&\quad  +\frac{1}{\lambda b}\int_{\mathbb{R}^N\backslash V_{b}}\lambda V|w(x,0)|^{2}\,dx\\
&\leq k_{s}\int_{\mathbb{R}^{N+1}_{+}}t^{1-2s}|\nabla w|^{2}dxdt
 +\frac{|V_{b}|^{\frac{2^{*}-2}{2^{*}}}}{S}
 k_{s}\int_{\mathbb{R}^{N+1}_{+}}t^{1-2s}|\nabla w|^{2}dxdt
 +\frac{1}{\lambda b}\int_{\mathbb{R}^N}\lambda V|w(x,0)|^{2}\,dx\\
&\leq\max\Big\{1,1+\frac{|V_{b}|^{\frac{2^{*}-2}{2^{*}}}}{S},\frac{1}{\lambda b}\Big\}
 k_{s}\int_{\mathbb{R}^{N+1}_{+}}t^{1-2s}|\nabla w|^{2}dxdt+\int_{\mathbb{R}^{N}}\lambda V|w(x,0)|^{2}\,dx\\
&:=c_0 k_{s}\int_{\mathbb{R}^{N+1}_{+}}t^{1-2s}|\nabla w|^{2}dxdt+\int_{\mathbb{R}^{N}}\lambda V|w(x,0)|^{2}\,dx,\\
\endaligned
\end{equation*}
for
 $$\lambda\geq\lambda_0:=\frac{S}{b(S+|V_{b}|^{\frac{2^{*}-2}{2^{*}}})}.
$$
So,
there exist positive constants $\lambda_0$ and $c_0$, independent of $\lambda$, such that
\begin{equation}\label{eq1-2}
\|w\|_{1}\leq  c_0\|w\|_{\lambda},\quad \text{for all }
 u\in E_{\lambda},\; \lambda\geq\lambda_0.
\end{equation}
Furthermore, the embedding $E_{\lambda}\hookrightarrow L^{p}(\mathbb{R}^N)$
is continuous for $p\in[2,2^{*}_{s}]$, and
$E_{\lambda}\hookrightarrow L_{Loc}^{p}(\mathbb{R}^N)$ is compact for
 $p\in[2,2^{*}_{s})$, i.e., there are constants $c_{p}>0$ such that
\begin{equation}\label{2.1}
\|w(x,0)\|_{L^p}\leq c_{p}\|w\|_{1}
\leq c_{p}c_0\|w\|_{\lambda},\quad \text{for all }
u\in E_{\lambda},\; \lambda\geq\lambda_0.
\end{equation}

In order to prove Theorem \ref{thm1.1}, we use the following result by Rabinowitz \cite{Rabinowitz}.
 \begin{lemma} \label{lem2.1}
Let $E$ be a real Banach space and $\Phi \in C^{1}(E, \mathbb{R})$
satisfy the (PS)-condition. If $\Phi$ is bounded from below, then
$c=\inf_{E} \Phi$ is a critical value of $\Phi$.
\end{lemma}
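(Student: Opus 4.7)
The plan is to deduce this from Ekeland's variational principle together with the $(PS)$ assumption. Since $\Phi$ is bounded from below, the infimum $c=\inf_{E}\Phi$ is a finite real number, so a minimizing sequence exists.

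First I would invoke Ekeland's variational principle applied to the functional $\Phi$ on the complete metric space $E$. For each $n\ge 1$, one obtains a point $u_n\in E$ satisfying
\begin{equation*}
\Phi(u_n)\le c+\tfrac{1}{n},\qquad \Phi(v)\ge \Phi(u_n)-\tfrac{1}{n}\|v-u_n\|\quad\text{for all } v\in E.
\end{equation*}
Testing the second inequality against $v=u_n+th$ with $h\in E$ of unit norm and letting $t\to 0^+$, the $C^1$ regularity of $\Phi$ yields $\langle \Phi'(u_n),h\rangle\ge -1/n$, and replacing $h$ by $-h$ gives the reverse bound, so $\|\Phi'(u_n)\|_{E^{*}}\le 1/n$. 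Thus $(u_n)$ is a Palais--Smale sequence at level $c$.

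Next I would apply the $(PS)$ condition: the sequence $(u_n)$ admits a subsequence (still denoted $(u_n)$) converging strongly to some $u\in E$. By continuity of $\Phi$, $\Phi(u)=\lim \Phi(u_n)=c$, so the infimum is attained. By continuity of $\Phi'\colon E\to E^{*}$, $\Phi'(u)=\lim \Phi'(u_n)=0$. Hence $u$ is a critical point of $\Phi$ with critical value $c$, which is exactly the conclusion.

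The result is essentially a textbook application of Ekeland, so there is no real obstacle. The only delicate point is the passage from the $\varepsilon$-minimizing inequality produced by Ekeland to the smallness of $\Phi'(u_n)$; this requires the $C^1$ hypothesis and a careful directional-derivative argument. The $(PS)$ hypothesis is used solely to extract a convergent subsequence from the Palais--Smale sequence so furnished.
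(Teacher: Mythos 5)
Your proof is correct and complete. Note, however, that the paper does not prove this lemma at all: it is quoted verbatim from Rabinowitz's CBMS lecture notes and used as a black box, so there is no in-paper argument to compare against. In Rabinowitz's text the result is obtained via the deformation (quantitative deformation / pseudo-gradient flow) machinery: if $c=\inf_E\Phi$ were a regular value, the deformation lemma would push the sublevel set $\{\Phi\le c+\varepsilon\}$ into $\{\Phi\le c-\varepsilon\}$, contradicting the definition of the infimum; the (PS)-condition enters to guarantee the deformation lemma applies near level $c$. Your route through Ekeland's variational principle is the other standard proof and is genuinely different: it produces a Palais--Smale sequence at level $c$ directly, without constructing a pseudo-gradient field, and then uses (PS) only to extract a strongly convergent subsequence. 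The Ekeland argument is more elementary and self-contained (it needs only completeness of $E$ and lower semicontinuity plus boundedness below of $\Phi$, with the $C^1$ hypothesis used exactly where you say, in passing from the $\varepsilon$-minimality inequality to $\|\Phi'(u_n)\|_{E^*}\le 1/n$); the deformation approach is heavier but generalizes to the minimax settings (mountain pass, linking) for which Rabinowitz's notes were written. Either way the lemma holds, and your write-up has no gaps.
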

\section{Proof of Theorem \ref{thm1.1}}
In this section, we will finish the proof of Theorem \ref{thm1.1}. First, we give some useful lemmas.
\begin{lemma} \label{lem2.2}
Assume that $(V_1)-(V_3)$, $(f_1)$ and $(f_2)$ hold. Then
there exists $\Lambda_0>0$ such that for every $\lambda>\Lambda_0$,
$J_{\lambda}$ is bounded from below in $E_{\lambda}$.
\end{lemma}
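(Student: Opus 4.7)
The plan is to bound $J_\lambda(w)$ from below by an expression of the form $\frac12\|w\|_\lambda^2 - C\|w\|_\lambda^p$ with $p\in(1,2)$, which is automatically bounded below as a real-valued function of $\|w\|_\lambda$. The sublinear growth exponent $p<2$ in $(f_1)$ is the essential ingredient, so no Palais--Smale or mountain-pass machinery is needed here --- the key is simply the interaction between the $L^{2/(2-p)}$ weight $\xi$ and the $L^2$-control coming from the embedding \eqref{2.1}.

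First I would integrate $(f_1)$ to get the pointwise estimate
\[
|F(u)| \le \tfrac{1}{p}\,\xi(x)|u|^{p}\qquad\text{for all }u\in\mathbb R,
\]
and then apply Hölder's inequality with conjugate exponents $\frac{2}{2-p}$ and $\frac{2}{p}$ to conclude
\[
\int_{\mathbb{R}^N}|F(w(x,0))|\,dx \le \tfrac{1}{p}\,\|\xi\|_{L^{2/(2-p)}}\,\|w(\cdot,0)\|_{L^2}^{\,p}.
\]

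Next I would invoke the continuous embedding estimate \eqref{2.1} with $p=2$, which is valid provided $\lambda\ge\lambda_0$, to bound $\|w(\cdot,0)\|_{L^2}\le c_2 c_0\|w\|_\lambda$. Setting
\[
C := \tfrac{1}{p}\,\|\xi\|_{L^{2/(2-p)}}\,(c_2 c_0)^{p},
\]
one obtains from \eqref{eq1-4} and the definition of $\|\cdot\|_\lambda$ that
\[
J_{\lambda}(w) \ge \tfrac{1}{2}\|w\|_\lambda^{2} - C\,\|w\|_\lambda^{p}\qquad\text{for all }w\in E_\lambda,\ \lambda\ge\lambda_0.
\]

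Finally, the real-valued function $g(t)=\tfrac12 t^2-Ct^p$ on $[0,\infty)$ attains its minimum at $t_0=(Cp)^{1/(2-p)}$ (since $p<2$), so $g$ is bounded below on $[0,\infty)$. This shows $J_\lambda$ is bounded below on $E_\lambda$ for every $\lambda\ge\Lambda_0:=\lambda_0$. There is no real obstacle in this argument; the only point that warrants care is making sure every constant appearing in the chain of inequalities (namely $c_0$ and $c_2$) is independent of $\lambda$, which is exactly what \eqref{eq1-2} and \eqref{2.1} guarantee. Note that assumption $(f_2)$ and condition $(V_3)$ play no role here --- they will be needed only later, to produce a \emph{negative} infimum and hence a nontrivial critical point via Lemma \ref{lem2.1}.
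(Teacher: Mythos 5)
Your proposal is correct and follows essentially the same argument as the paper: integrate $(f_1)$ to bound $|F(u)|$ by a constant times $\xi(x)|u|^p$, apply H\"older with exponents $\tfrac{2}{2-p}$ and $\tfrac{2}{p}$, use the $\lambda$-independent embedding constant from \eqref{2.1} to get $J_\lambda(w)\ge \tfrac12\|w\|_\lambda^2-C\|w\|_\lambda^p$, and conclude from $p<2$. Your observation that $(f_2)$ and $(V_3)$ are not used here, and that the estimate in fact gives coercivity, matches the paper as well.
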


\begin{proof}
From \eqref{2.1}, $(f_1)$ and the H\"older inequality,
we have
\begin{equation}\label{2.4}
\begin{aligned}
 J_{\lambda}(w)&=\frac{1}{2} \|w\|_{\lambda}^{2}-\int_{\mathbb{R}^N}F(w(x,0))\,dx\\
&\geq \frac{1}{2} \|w\|_{\lambda}^{2}-
\Big(\int_{\mathbb{R}^N}|\xi(x)|^{\frac{2}{2-p}}\,dx\Big)
^{(2-p)/2}\Big(\int_{\mathbb{R}^N}|w(x,0)|^{2}\,dx\Big)^{p/2}\\
&\geq \frac{1}{2}
\|w\|_{\lambda}^{2}-c_{2}^{p}
c_0^{p}\|\xi\|_{L^{\frac{2}{2-p}}}\|w\|_{\lambda}^{p},
\end{aligned}
\end{equation}
which implies that $J_{\lambda}(w)\to +\infty$ as
$\|w\|_{\lambda}\to +\infty$, since
$1<p<2$. Consequently, there
exists $\Lambda_0:=\max\{1,\lambda_0\}>0$ such that for every
$\lambda>\Lambda_0$,
$J_{\lambda}$ is bounded from below and coerciveness on $E_{\lambda}$.
\end{proof}

\begin{lemma} \label{lem2.3}
 Suppose that $(V_1)-(V_3)$, $(f_1)$ and $(f_2)$ are satisfied.
Then $J_{\lambda}$ satisfies the (PS)-condition for each
$\lambda>\Lambda_0$.
\end{lemma}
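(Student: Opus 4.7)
The plan is to follow the standard route: bound any Palais--Smale sequence via the coercivity of $J_\lambda$ already shown in Lemma \ref{lem2.2}, extract a weak limit in the Hilbert space $E_\lambda$, and upgrade weak to strong convergence by proving $\|w_n\|_\lambda\to\|w\|_\lambda$. Concretely, I would take $\{w_n\}\subset E_\lambda$ with $\{J_\lambda(w_n)\}$ bounded and $J'_\lambda(w_n)\to 0$ in $E_\lambda^{\ast}$. The coercivity expressed by \eqref{2.4} (which holds because $1<p<2$) gives that $\{w_n\}$ is bounded in $E_\lambda$. Passing to a subsequence, $w_n\rightharpoonup w$ in $E_\lambda$, and the local compact embedding stated around \eqref{2.1} yields $w_n(\cdot,0)\to w(\cdot,0)$ in $L^{q}(B_R)$ for every ball $B_R\subset\mathbb{R}^N$ and every $q\in[2,2^{*}_{s})$.

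The heart of the argument is to establish
\[
I_n:=\int_{\mathbb{R}^N}f\bigl(w_n(x,0)\bigr)\bigl(w_n(x,0)-w(x,0)\bigr)\,dx\longrightarrow 0.
\]
From $(f_1)$ and Hölder's inequality with exponents $\tfrac{2}{2-p}$, $\tfrac{2}{p-1}$ and $2$, one obtains, for every measurable $A\subset\mathbb{R}^N$,
\[
\int_{A}\xi(x)|w_n|^{p-1}|w_n-w|\,dx\leq \|\xi\|_{L^{2/(2-p)}(A)}\,\|w_n(\cdot,0)\|_{L^2(\mathbb{R}^N)}^{p-1}\,\|w_n(\cdot,0)-w(\cdot,0)\|_{L^2(A)},
\]
with the global $L^2$-norms bounded uniformly in $n$ by \eqref{2.1}. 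Given $\varepsilon>0$, absolute continuity of the integral of $|\xi|^{2/(2-p)}$ produces $R>0$ with $\|\xi\|_{L^{2/(2-p)}(\mathbb{R}^N\setminus B_R)}<\varepsilon$, which bounds the tail contribution to $I_n$ by $C\varepsilon$; on $B_R$, local compactness forces $\|w_n(\cdot,0)-w(\cdot,0)\|_{L^2(B_R)}\to 0$, so the inner piece vanishes as $n\to\infty$. Sending $\varepsilon\to 0$ yields $I_n\to 0$. This is the main obstacle, because global compactness in $L^2(\mathbb{R}^N)$ fails for fixed $\lambda$; my plan bypasses it not through a steep-well bound on $V$ (which would only give $\int_{\{V\geq b\}}|w|^{2}\leq(\lambda b)^{-1}\|w\|_\lambda^{2}$, not convergent in $n$) but through the $L^{2/(2-p)}$-integrability of $\xi$, which concentrates the relevant mass on a bounded set where the local compact embedding applies.

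Finally, testing $J'_\lambda(w_n)$ against $w_n-w$ gives the identity
\[
\|w_n\|_\lambda^{2}-(w_n,w)_\lambda=\langle J'_\lambda(w_n),w_n-w\rangle+I_n,
\]
where $(\cdot,\cdot)_\lambda$ denotes the inner product inducing $\|\cdot\|_\lambda$. The right-hand side tends to $0$ since $J'_\lambda(w_n)\to 0$, $\{w_n-w\}$ is bounded in $E_\lambda$, and $I_n\to 0$. Weak convergence provides $(w_n,w)_\lambda\to\|w\|_\lambda^{2}$, so $\|w_n\|_\lambda\to\|w\|_\lambda$. Combined with $w_n\rightharpoonup w$ in the Hilbert space $E_\lambda$, this yields $w_n\to w$ strongly in $E_\lambda$, which finishes the (PS)-condition.
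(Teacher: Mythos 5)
Your proposal is correct and follows essentially the same route as the paper: coercivity from Lemma \ref{lem2.2} gives boundedness of the (PS) sequence, and the nonlinear term is handled by splitting $\mathbb{R}^N$ into a large ball, where the compact local embedding applies, and its complement, where the smallness of $\|\xi\|_{L^{2/(2-p)}}$ on the tail controls the integral. The only cosmetic difference is that you test $J'_\lambda(w_n)$ against $w_n-w$ and invoke weak convergence to pass to the norm limit, while the paper works with $\langle J'_\lambda(w_n)-J'_\lambda(w_0),w_n-w_0\rangle$; both are standard and equivalent here.
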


\begin{proof}
Assume that $\{w_n\}\subset E_{\lambda}$ is a sequence such that
$J_{\lambda}(w_n)$ is bounded and $J'_{\lambda}(w_n)\to 0$ as
$n\to \infty$. By Lemma \ref{lem2.2}, it is clear that $\{w_n\}$ is bounded
in $E_{\lambda}$. Thus, there exists a constant $C>0$ such that
\begin{equation}\label{2.5}
\|w_n(x,0)\|_{L^p}\leq c_{p}c_0\|w_n\|_{\lambda}
\leq C,\quad \text{for all } w\in E_{\lambda},\; \lambda\geq\lambda_0,
\end{equation}
where $2\leq p\leq 2^{*}_{s}$.
Passing to a subsequence if necessary, we may assume that
$w_n \rightharpoonup w_0$ weakly in $E_{\lambda}$.
For any $\epsilon >0$, since
$\xi(x)\in L^{\frac{2}{2-p}}(\mathbb{R}^N, \mathbb{R}^{+})$,
we can choose $R_{\epsilon}>0$ such that
\begin{equation}\label{2.6}
\Big(\int_{\mathbb{R}^N\setminus B_{R_{\epsilon}}}|\xi(x)|
^{\frac{2}{2-p}}\,dx\Big)
^{(2-p)/2}<\epsilon.
\end{equation}
From $E_{\lambda}\hookrightarrow L^{p}$ and  $w_n \rightharpoonup _0$
weakly in $E_{\lambda}$, we have
$w_n(x,0)\to w_0(x,0)$ strongly in $L_{ Loc}^{2}( \mathbb{R}^N)$.
Hence
\begin{equation}\label{2.7}
\lim_{n\to \infty}\int_{B_{R_{\epsilon}}} |w_n(x,0)-w_0(x,0)|^{2}\,dx=0.
\end{equation}
Therefore, from  \eqref{2.7}, there exists $N_0\in \mathbb{N}$ such that
\begin{equation}\label{2.8}
\int_{B_{R_{\epsilon}}} |w_n(x,0)-w_0(x,0)|^{2}\,dx<\epsilon ^{2},
\quad \text{for }n\geq N_0.
\end{equation}
Hence, by $(f_1)$, \eqref{2.5}, \eqref{2.8} and the H\"older inequality, for any
$n\geq N_0$, we have
\begin{equation}\label{2.9}
\begin{aligned}
&\int_{B_{R_{\epsilon}}}\left |f(w_n(x,0))-f(w_0(x,0))\right| |w_n(x,0)-w_0(x,0)|\,dx\\
&\leq \Big(\int_{B_{R_{\epsilon}}}|f(w_n(x,0))-f(w_0(x,0))|^{2}\,dx\Big)^{1/2}
\Big(\int_{B_{R_{\epsilon}}}|w_n(x,0)-w_0(x,0)|^{2}\,dx\Big)^{1/2}\\
&\leq \epsilon\Big(\int_{B_{R_{\epsilon}}}2\left(|f(w_n(x,0))|^{2}+|f(w_0(x,0))|^{2}\right)
 \,dx\Big)^{1/2}\\
&\leq 2\epsilon\Big[
\Big(\int_{B_{R_{\epsilon}}}|\xi(x)|^{2}\left(|w_n(x,0)|^{2(p-1)}
+|w_0(x,0)|^{2(p-1)}\right)\,dx\Big)^{1/2}\Big]\\
&\leq 2\epsilon \Big[\|\xi\|_{L^{\frac{2}{2-p}}}^{2}
\Big(\|w_n(x,0)\|_{2}^{2(p-1)}
+\|w_0(x,0)\|_{L^2}^{2(p-1)}\Big)\Big]^{1/2}\\
&\leq 2\epsilon\Big[\|\xi\|_{L^{\frac{2}{2-p}}}^{2}
\Big(C^{2(p-1)}
+\|w_0(x,0)\|_{L^2}^{2(p-1)}\Big)\Big]^{1/2}.
\end{aligned}
\end{equation}
On the other hand, by \eqref{2.5}, \eqref{2.6}, \eqref{2.8} and $(f_1)$, we have
\begin{equation}\label{2.10}
\begin{aligned}
&\int_{\mathbb{R}^N\setminus B_{R_{\epsilon}}}\left |f(w_n(x,0))-f(w_0(x,0))\right| |w_n(x,0)-w_0(x,0)|\,dx\\
&\leq 2\int_{\mathbb{R}^N\setminus B_{R_{\epsilon}}}|\xi(x)|
\left(|w_n(x,0)|^{p}+|w_0(x,0)|^{p}\right)\,dx\\
&\leq 2\epsilon c_{2}^{p}c_0^{p}
\left(\|w_n\|_{\lambda}^{p}+\|w_0\|_{\lambda}^{p}\right)\\
&\leq  2\epsilon c_{2}^{p}c_0^{\gamma_i}
\left(C^{p}+\|w_0\|_{\lambda}^{p}\right).
\end{aligned}
\end{equation}
Since $\epsilon$ is arbitrary, combining \eqref{2.9} with \eqref{2.10}, we have
\begin{equation}\label{2.11}
\int_{\mathbb{R}^N}\left |f(w_n(x,0))-f(w_0(x,0))\right| |w_n(x,0)-w_0(x,0)|\,dx<\epsilon,
\end{equation}
as $n\to \infty$. Hence,
\begin{equation}\label{2.12}
\begin{aligned}
&\langle J'_{\lambda}(w_n)-J'_{\lambda}(w_0), w_n-w_0\rangle=\|w_n-w_0\|_{\lambda}^{2} \\
&+\int_{\mathbb{R}^N}
 (f(
w_n(x,0))-f(w_0(x,0)))(w_n(x,0)-w_0(x,0))\,dx.
\end{aligned}
\end{equation}
From, $\langle J'_{\lambda}(w_n)-J'_{\lambda}(u_0),
w_n-w_0\rangle\to 0$,  \eqref{2.11} and \eqref{2.12},
we get $w_n\to w_0$ strongly in $E_{\lambda}$. Hence, $J_{\lambda}$ satisfies
(PS)-condition.
\end{proof}

\begin{proof}[Proof of Theorem \ref{thm1.1}]
 From Lemmas \ref{lem2.1}, \ref{lem2.2}, \ref{lem2.3}, we  know that
$c_{\lambda}=\inf_{E_{\lambda}}J_{\lambda}(w)$
is a critical value of functional $J_{\lambda}$;
that is, there exists a critical point $w_{\lambda}\in E_{\lambda}$ such that
$J_{\lambda}(w_{\lambda})=c_{\lambda}$.
Next, similar to the argument in \cite{TL}, we show that $w_{\lambda}\neq 0$.
Let $w^{*}\in H_{0,L}^{s}(\mathcal{C}_{\Omega})\setminus \{0\}$
and $\|w^{*}\|_{L^{\infty}}\leq 1$, then by $(f_2)$, we have
\begin{equation}\label{2.13}
\begin{aligned}
 J_{\lambda}(tw^{*})
&=\frac{1}{2} \|tw^{*}\|_{\lambda}^{2}
-\int_{\mathbb{R}^N}F( tw^{*}(x,0))\,dx\\
&= \frac{t^{2}}{2} \|w^{\ast}\|_{\lambda}^{2}-\int_{\Omega}F( tw^{*}(x,0))\,dx\\
&\leq \frac{t^{2}}{2} \|w^{*}\|_{\lambda}^{2} -\eta
t^{\gamma}\int_{\Omega}|w^{*}|^{\gamma}\,dx,
\end{aligned}
\end{equation}
where $0<t<\delta$, $\delta$ be given in $(f_2)$. Since $1<\gamma<2$,
it follows from \eqref{2.13} that
$J_{\lambda}(tw^{\ast})<0$ for $t>0$ small enough. Hence,
$J_{\lambda}(w_{\lambda})=c_{\lambda}<0$, therefore, $w_{\lambda}$ is a
nontrivial critical point of $J_{\lambda}$ and so $w_{\lambda}$
is a nontrivial solution of problem  \eqref{eq1-1*}, that is,
 $u_{\lambda}(x):=Tr(w_{\lambda})=w_{\lambda}(x,0)$
is a nontrivial solution of problem  \eqref{1.1}. The
proof is complete.
\end{proof}

\section{Concentration of solutions}

In the following, we study the concentration of solutions for
problem \eqref{1.1} as $\lambda\to\infty$.
Define
$$
\tilde{c}=\inf_{w\in \HO}J_{\lambda}|_{\HO}(w),
$$
where $J_{\lambda}|_{\HO}$ is a restriction of $J$ on
$ \HO$; that is,
$$
J_{\lambda}|_{\HO}(w)=\frac{k_{s}}{2}
\int_{\mathcal{C}_{\Omega}}t^{1-2s}|\nabla w|^{2}dxdt-\int_{\Omega}F(w(x,0))dx,
$$
for $w\in \HHR$.
Similar to the proof of Theorem \ref{thm1.1}, it is easy to prove that
$\tilde{c}<0$ can be achieved.
Since $\HO\subset E_{\lambda}$ for all $\lambda >0$, we get
$$
c_{\lambda}\leq \tilde{c}<0,\quad \text{for all }\lambda>\Lambda_0.
$$
\begin{proof}[Proof of Theorem \ref{thm1.2}]
 We follow the arguments in \cite{BPW}. For any sequence
$\lambda_n\to \infty$, let $w_n:=w_{\lambda_n}$ be the critical points of
$J_{\lambda_{n}}$ obtained in Theorem \ref{thm1.1}. Thus
\begin{equation}\label{3.1}
J_{\lambda_n}(w_n) \leq \tilde{c}<0
\end{equation}
and
\begin{align*}
 J_{\lambda_n}(w_n)&=\frac{1}{2} \|w_n\|_{\lambda_n}^{2}-
\int_{\mathbb{R}^N}F(w_n(x,0))dx\\
&\geq \frac{1}{2}
\|w_n\|_{\lambda_n}^{2}-c_{2}^{p}c_0^{p}
\|\xi\|_{\frac{2}{2-p}}\|w_n\|_{\lambda_n}^{p},
\end{align*}
which implies
\begin{equation}\label{3.2}
\|w_n\|_{\lambda_n}\leq C,
\end{equation}
where the constant $C>0$ is independent of $\lambda_n$.
Therefore, we may assume that $w_n\rightharpoonup w_0$ in $E_{\lambda}$
and $w_n(x,0)\to w_0(x,0)$ in $L_{\rm loc}^{p}(\mathbb{R}^N)$ for
$2\leq p< 2^{\ast}_{s}$. From Fatou's lemma, we have
$$
\int_{\mathbb{R}^N}V(x)|w_0(x,0)|^{2}\,dx
\leq \liminf_{n\to \infty}\int_{\mathbb{R}^N}V(x)|w_n(x,0)|^{2}\,dx
\leq \liminf_{n\to \infty}\frac{\|w_n\|_{\lambda_n}^{2}}{\lambda_n}=0,
$$
which implies that $w_0=0$ a.e. in $\mathbb{R}^N \setminus
\overline{V^{-1}(0)}$ and $w_0\in \HO$ by
$(V_3)$. Now for any $\varphi \in C_0^{\infty}(\mathcal{C}_{\Omega})$, since
$\langle J'_{\lambda_n}(w_n), \varphi \rangle=0$, it is easy
to verify that
$$
k_{s}\int_{\mathcal{C}_{\Omega}}t^{1-2s}\nabla w_{0}\cdot
\nabla \varphi dxdt-\int_{\Omega}f(w_0(x,0))\varphi \,dx=0,
$$
which implies that $u_{0}(x):=Tr(w_0)$ is a weak solution of equation
\eqref{1.2} by the density of $C_0^{\infty}(\overline{\mathcal{C}}_{\Omega})$ in $ \HO$.

Next, we show that
$w_n(x,0)\to w_0(x,0)$ in $L^{p}(\mathbb{R}^N)$ for $2\leq p<
2_{\ast}$. Otherwise, by Lions vanishing lemma \cite{L,W}, there
exist $\delta>0, \rho>0$ and $(x_n,y)\in \mathbb{R}^{N+1}_{+}$ such that
$$
\int_{B^{+}_{\rho}\cap\{y=0\}}|w_n-w_0|^{2}\,dx\geq \delta,
$$
where $B_{\rho}^{+}:=\{(x,y):\,|(x,y)-(x_{n},y)|<\rho, y>0\}$,
and its base denotes by $B_{\rho}$.
Since $w_n(x,0)\to w_0(x,0)$ in $L_{Loc}^{2}(\mathbb{R}^N)$, let $|x_n|\to \infty$,
we have $|B_{\rho}\cap V_{b}|\to 0$.
 By the H\"older inequality, we get
$$
\int_{B_{\rho}\cap V_{b}}|w_n(x,0)-w_0(x,0)|^{2}\,dx
\leq |B_{\rho}\cap V_{b}|^{\frac{2_{*}-2}{2_{*}}}
 \Big(\int_{\mathbb{R}^N}|u_n-u_0|^{2_{*}}\Big)^{2/2_*}\to 0.
$$
Consequently,
\begin{align*}
\|w_n\|_{\lambda_n}^{2}&\geq \lambda_n b
\int_{B_{\rho}\cap V_{b}^{\perp}}|w_n(x,0)|^{2}dx\\
&=\lambda_nb\Big(\int_{B_{\rho}\cap V_{b}^{\perp}}|w_n(x,0)-w_0(x,0)|^{2}\,dx+
\int_{B_{\rho}\cap V_{b}^{\perp}}|w_0(x,0)|^{2}dx\Big)+o(1)\\\
&\geq \lambda_nb\Big(\int_{B_{\rho}}|w_n(x,0)-w_0(x,0)|^{2}\,dx
-\int_{B_{\rho}\cap V_{b}}|w_n(x,0)-w_0(x,0)|^{2}\,dx\Big)+o(1)\\
&\to \infty\,\,\,\text{as}\,\,n\to \infty,
\end{align*}
which contradicts \eqref{3.2}.  By virtue of
$\langle J'_{\lambda_n}(w_n), w_n \rangle=\langle J'_{\lambda_n}(w_n), w_0 \rangle=0$ and the fact that
$w_n(x,0)\to w_0(x,0)$ strongly in $L^{p}(\mathbb{R}^N)$ for $2\leq p< 2^{*}_{s}$, we have
$$
\lim_{n\to \infty}\|w_n\|_{\lambda_n}^{2}=\|w_0\|^{2}_{\lambda_n}.
$$
Hence, $ w_n\to w_0$ strongly in  $E_{\lambda}$.
Moreover, from \eqref{3.1}, we have $w_0\neq 0$. This completes the proof.
\end{proof}


\begin{thebibliography}{00}
\bibitem{beau2012}
B. Barrios, E. Colorado, A. de Pablo, U. S'anchez,
On some critical problems for the fractional Laplacian,
J. Differential Equations 252 (2012) 6133--6162.

\bibitem{bes2013}
B. Barrios,   E. Colorado, A. de Pablo and  U. S\'{a}nchez, 
 A concave-convex elliptic problem involving the fractional Laplacian.
Proc. Royal Soc. Edi. A 143 (2013) 39--71. 

\bibitem{BPW}
T. Bartsch, A. Pankov, Z. Q. Wang;
Nonlinear Schr\"odinger equations with steep potential wel, Commun.
Contemp. Math. 3 (2001) 549--569.

\bibitem{ll2007}
L. Caffarelli, L. Silvestre, An extension problems related to the fractional Laplacian, Comm. PDE 32 (2007)1245每
1260. 2


\bibitem{xt2010}
X. Cabr∩e, J.G. Tan, Positive solutions of nonlinear problems involving the square root of the Laplacian, Adv. Math.
224 (2010),2052每2093.


\bibitem{ajly2011}
A. Capella, J. D\'avila, L. Dupaigne, Y. Sire, Regularity of radial extremal solutions for some non local semilnear
equations, Comm. PDE 36 (2011) 1353每1384.

\bibitem{ege2012}
E. Di Nezza, G. Palatucci, E. Valdinoci, Hitchhiker＊s guide to the fractional Sobolev spaces, Bull. Sci. Math. 136
(2012) 521每573. 1, 12

\bibitem{fqt2012}
P. Felmer, A. Quaas and J. Tan, Positive solutions of nonlinear Schr\"odinger equation with fractional Laplacian,
Proc. Royal Soc. Edi. {\bf 142}, 1237--1262 (2012)


\bibitem{L} 
P. L. Lions;
The concentration-compactness principle in the calculus of variations.
The local compact case Part I, Ann. Inst. H. Poincar\'{e} Anal. NonLin\'{e}aire,
1 (1984) 109--145.


\bibitem{Rabinowitz} P. H. Rabinowitz;
Minimax methods in critical point theory with
applications to differential equations, CBMS Regional Conf. Ser. in. Math., 65,
American Mathematical Society, Providence, RI, 1986.

\bibitem{sg2013}
Z. Shen and F. Gao, Existence of solutions for a fractional Laplacian equations with critical nonlineatity,
 Abst. Appl. Anal. 2013, Article ID 638425.



\bibitem{TL} X. H. Tang, X. Y. Lin;
Infinitely many homoclinic orbits for Hamiltonian
systems with indefinite sign subquadratic potentials, Nonlinear Anal.
74 (2011) 6314--6325.

\bibitem{W} Y. H. Wei;
Multiplicity results for some fourth-order elliptic equations,
J. Math. Anal. Appl., 385 (2012) 797--807.



\bibitem{zl2014}
J. Zhang and X. Liu, Positive solutions to some asymptotically linear fractional Schr\"odinger equations,
arXiv:1411.2189.


\end{thebibliography}
\end{document}